\newtheorem{theorem}{Theorem}
\newenvironment{proof}{\prepf\rm}{\endprepf}
\newcommand{\qed}{\qquad\Box}
\begin{document}
\title{Counting spanning trees containing a forest: a short proof}
\author{Peter J. Cameron and Michael Kagan}
\date{}
\maketitle
\begin{abstract}
Given a spanning forest $F$ of the complete graph on $n$ vertices, with
components of sizes $q_1,q_2,\ldots,q_m$, the number of spanning trees
containing $F$ is $q_1q_2\cdots q_mn^{m-2}$. We give a short self-contained
proof of this result.
\end{abstract}

By Cayley's Theorem, the number of spanning trees of the complete graph
$K_n$ is $n^{n-2}$.

A simple double counting argument shows that the number of spanning trees
containing an edge $e$ is $2n^{n-3}$.  (Count pairs $(T,e)$ where $T$ is a
spanning tree and $e$ an edge of $T$. Choosing $T$ first, the number of pairs
is $(n-1)n^{n-2}$, since $T$ has $n-1$ edges. The other way, there are
$n(n-1)/2$ edges, and so there must be $2n^{n-3}$ trees containing a given
edge.)

Now it is clear from symmetry that the number of spanning trees containing
two edges $e$ and $f$ depends only on whether $e$ and $f$ intersect. But
this cannot be found by such a simple argument.

We give a short proof of the following theorem of Moon~\cite{Moon},
showing in particular that the numbers mentioned are $3n^{n-4}$ if $e$ and
$f$ intersect, and $4n^{n-4}$ if they do not.

\begin{theorem}
Let $F$ be a spanning forest of the complete graph $K_n$, whose connected
components have $q_1,q_2,\ldots,q_m$ vertices, with
$q_1+q_2+\cdots+q_m=n$. Then the number of spanning trees containing $F$ is
$(q_1q_2\cdots q_m)n^{m-2}$.
\end{theorem}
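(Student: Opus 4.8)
The plan is to reduce the count to a weighted enumeration of trees on the index set $\{1,\dots,m\}$ of the components of $F$, and then to read that weighted enumeration off from the Pr\"ufer correspondence. Write $C_1,\dots,C_m$ for the components of $F$, so $|C_i|=q_i$. Given a spanning tree $T\supseteq F$ of $K_n$, it has $n-1$ edges whereas $F$ has $n-m$, so exactly $m-1$ edges of $T$ lie outside $F$; since $T$ is acyclic, each of these joins two \emph{distinct} components of $F$, and no two of them join the same pair of components (either failure would produce a cycle inside $T$). Contracting each $C_i$ to a point therefore turns $T$ into a graph $\bar T$ on $\{1,\dots,m\}$ with $m-1$ distinct edges, connected because $T$ is, hence a tree; moreover $T$ is recovered from $\bar T$ together with the data, for each edge $\{i,j\}$ of $\bar T$, of which vertex of $C_i$ and which vertex of $C_j$ serve as its endpoints in $K_n$. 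Conversely, any tree on $\{1,\dots,m\}$ together with such a choice of endpoints reconstitutes, upon adding the corresponding $m-1$ edges to $F$, a spanning tree of $K_n$ containing $F$. Since the edge $\{i,j\}$ of $\bar T$ admits $q_iq_j$ choices of endpoints, this bijection gives
\begin{equation*}
N \;=\; \sum_{\bar T}\ \prod_{\{i,j\}\in E(\bar T)} q_i q_j \;=\; \sum_{\bar T}\ \prod_{v=1}^{m} q_v^{\,\deg_{\bar T}(v)},
\end{equation*}
the sum running over all trees $\bar T$ with vertex set $\{1,\dots,m\}$ (the case $m=1$, where $F$ is already a spanning tree, being immediate).

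To evaluate this sum I would invoke the Pr\"ufer bijection between trees on $\{1,\dots,m\}$ and sequences $(s_1,\dots,s_{m-2})\in\{1,\dots,m\}^{m-2}$, under which a vertex $v$ occurs in the code exactly $\deg_{\bar T}(v)-1$ times; for self-containedness I would include the short proof of this degree property. Putting $Q=q_1q_2\cdots q_m$, a tree $\bar T$ with code $(s_1,\dots,s_{m-2})$ contributes $\prod_v q_v^{\deg_{\bar T}(v)}=Q\prod_v q_v^{\#\{j:\,s_j=v\}}=Q\,q_{s_1}q_{s_2}\cdots q_{s_{m-2}}$. Summing over all trees, i.e.\ over all codes, yields
\begin{equation*}
N \;=\; Q\sum_{(s_1,\dots,s_{m-2})} q_{s_1}\cdots q_{s_{m-2}} \;=\; Q\,(q_1+q_2+\cdots+q_m)^{m-2} \;=\; (q_1q_2\cdots q_m)\,n^{m-2},
\end{equation*}
which is the claimed formula.

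The step I expect to need the most care is the bijective reduction itself: verifying that contraction maps the spanning trees $T\supseteq F$ bijectively onto the set of \emph{all} trees on $\{1,\dots,m\}$ decorated with endpoint choices — in particular that the reconstructed graph really is a spanning tree and that distinct decorated edges never coincide as edges of $K_n$ or with edges of $F$. The other ingredient whose proof should be spelled out is the degree-multiplicity property of Pr\"ufer codes, since that is exactly what converts $\sum_{\bar T}\prod_v q_v^{\deg_{\bar T}(v)}$ into the multinomial power $(q_1+\cdots+q_m)^{m-2}$. As sanity checks: $q_1=\dots=q_m=1$ recovers Cayley's $n^{n-2}$; two intersecting edges give component sizes $3,1,\dots,1$ and hence $3n^{n-4}$; two disjoint edges give sizes $2,2,1,\dots,1$ and hence $4n^{n-4}$.
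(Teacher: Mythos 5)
Your proof is correct, but after the shared first step it takes a genuinely different route from the paper. Both arguments begin by contracting the components of $F$: the paper passes to the multigraph $M$ on $m$ vertices with $q_iq_j$ parallel edges between $v_i$ and $v_j$, while you pass to the equivalent weighted count $\sum_{\bar T}\prod_v q_v^{\deg_{\bar T}(v)}$ over simple trees on $\{1,\dots,m\}$ --- the same reduction, and your careful justification of the bijection (acyclicity forcing the $m-1$ extra edges to join distinct pairs of components, no two joining the same pair) is exactly what the paper compresses into two sentences. The divergence is in the evaluation. The paper invokes Kirchhoff's matrix-tree theorem and computes the $(1,1)$ cofactor of the Laplacian of $M$ by elementary row and column operations, arriving at $q_1\cdots q_m\,n^{m-2}$ in three displayed steps. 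You instead invoke the Pr\"ufer correspondence together with the fact that vertex $v$ occurs $\deg_{\bar T}(v)-1$ times in the code, which converts the weighted sum into $q_1\cdots q_m\,(q_1+\cdots+q_m)^{m-2}$; in effect you prove the generalized (weighted) Cayley formula along the way. Your route is fully bijective and arguably more elementary --- no determinants and no appeal to Kirchhoff --- at the cost of having to spell out the Pr\"ufer degree property, which you rightly flag; the paper's route is shorter on the page but leans on the matrix-tree theorem as a black box. Your sanity checks ($3n^{n-4}$ for intersecting edges, $4n^{n-4}$ for disjoint ones, and Cayley's $n^{n-2}$ when all $q_i=1$) agree with the values stated in the paper's introduction.
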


\begin{proof}
We use the result of Kirchhoff~\cite{Kirchhoff}, asserting that the number of
spanning trees of a graph is equal to the cofactor of the Laplacian matrix
of the graph. (The graph is permitted to have multiple edges but not loops.
The Laplacian matrix has $(i,j)$ entry the negative of the number of edges from
the $i$th to the $j$th vertex if $i\ne j$, and the valency of the $i$th vertex
if $i=j$. It has row and column sums zero, and all its cofactors have the same
value.)

We first observe that the number of spanning trees containing $F$ is equal
to the number of spanning trees of the multigraph $M$ which has $m$ vertices
$v_1,\ldots,v_m$, where the number of edges from $v_i$ to $v_j$ is $q_iq_j$.
For if we take the given graph $K_n$, we can shrink each component of $F$ to
a single vertex to give the multigraph $M$; if $T$ is a spanning tree of $K_n$
containing $F$, then under this shrinking $T\setminus F$ becomes a spanning
tree of $M$, and every spanning tree of $M$ arises in this way.

So we have to count spanning trees of $M$. The Laplacian matrix of $M$ is
\[
\begin{pmatrix}
nq_1-q_1^2&-q_1q_2&-q_1q_3&\ldots&-q_1q_m\\
-q_2q_1&nq_2-q_2^2&-q_2q_3&\ldots&-q_2q_m\\
-q_3q_1&-q_3q_2&nq_3-q_3^2&\ldots&-q_3q_m\\
\vdots&\vdots&\vdots&\ddots&\vdots\\
-q_mq_1&-q_mq_2&-q_mq_3&\ldots&nq_m-q_m^2
\end{pmatrix}.
\]

We calculate the $(1,1)$ cofactor $L'$, the determinant obtained by deleting
the first row and column of this matrix. Take a factor $q_i$ from the $i$th
row, where the rows are numbrered $2$ to $m$. The cofactor reads
\[L'=q_2q_3\cdots q_m\left|
\begin{matrix}
n-q_2&-q_3&\ldots&-q_m\\
-q_2&n-q_3&\ldots&-q_m\\
\vdots&\vdots&\ddots&\vdots\\
-q_2&-q_3&\ldots&n-q_m
\end{matrix}\right|.\]

Now add all other columns to the leftmost one, and take out a factor $q_1$:
\[L'=q_1q_2q_3\cdots q_m\left|
\begin{matrix}
1&-q_3&\ldots&-q_m\\
1&n-q_3&\ldots&-q_m\\
\vdots&\vdots&\ddots&\vdots\\
1&-q_3&\ldots&n-q_m
\end{matrix}
\right|.\]

Finally, subtract the top row from each of the others:
\begin{eqnarray*}
L'&=&q_1q_2\cdots q_m\left|
\begin{matrix}
1&-q_3&\ldots&-q_m\\
0&n&\ldots&0\\
\vdots&\vdots&\ddots&\vdots\\
0&0&\ldots&n
\end{matrix}
\right|\\
&=&q_1q_2\cdots q_m n^{m-2}.\qed
\end{eqnarray*}

\end{proof}
\section*{Acknowledgements}
The authors are grateful to Victor Reiner for the idea of replacing the original graph with a multi-graph $M$, which allowed to shorten the paper dramatically.

\end{document}